\newtheorem{thm}{Theorem}
 \newtheorem{cor}[thm]{Corollary}
 \newtheorem{lem}[thm]{Lemma}
 \newtheorem{prop}[thm]{Proposition}
 \newtheorem{defn}[thm]{Definition}
\begin{document}

\title{Finite Operator-Valued Frames}

\author{Bin Meng}
\address{Department of Mathematics,  Nanjing University of Aeronautics and Astronautics, Nanjing 210016, P.R.China}

 \email{bmengnuaa@gmail.com}

\thanks{2000 \textit{Mathematics Subject Classification.} 42C15; 46C05; 47B10 }

\date{}

\thanks{\textit{Key words and phrases.} Operator-valued frame; erasure
 }

 \thanks{The author was supported by NUAA Research Funding, NO.NS2010197}

\begin{abstract}
Operator-valued frames are natural generalization of frames that have been used in quantum computing, packets encoding, etc. In this paper, we focus on developing the theory about operator-valued frames for finite Hilbert spaces. Some results concerning dilation, alternate dual, and existence of operator-valued frames are given. Then we characterize the optimal operator-valued frames under the case which one packet of data is lost in transmission. At last we construct the operator-valued frames $\{V_j\}_{j=1}^m$ with given frame operator $S$ and satisfying $V_jV_j^*=\alpha_jI$, where $\alpha_j's$ are positive numbers.

\end{abstract}

\maketitle

\section{Introduction}

Frames are redundant sets of vectors in a Hilbert space which have been used to capture significant signal characteristics \cite{ben2}, provide numerical stability of reconstruction, and enhance resilience to additive
noise \cite{dea}. The frame theory has developed rather rapidly in the past decade motivated by its applications on engineering and pure mathematics.

Important examples of infinite frames are the Gabor frames and the wavelet frames. Many authors have studied the infinite fames by operator-theoretic methods (see \cite{cas1}, \cite{chr}, \cite{gab} and \cite{han1}). In \cite{han1}, an important idea is "dilation", that is, Parseval frames can be "dilated" to orthonormal bases and general frames can be "dilated" to Riesz bases.

The finite frame theory has developed almost as a separate theory in itself. Finite frames play a fundamental role in a variety of important areas including multiple antenna coding (\cite{gk}), perfect reconstruction filter banks \cite{dgk} and quantum theory \cite{ef}. Also finite frame theory connects to theoretical problems such as the Kadison-Singer problem \cite{cas3}. One important problem in the finite frame theory is to construct finite frames with prescribed norm for each vector in the tight frames (\cite{cas4},\cite{cas5}).
While designing various optimal frames is the essential problem in finite frame theory. For example, finding the optimal tight frame with erasures has been studied  intensively in \cite{hol}, \cite{cas1}, \cite{gk}, \cite{cas5}, etc.

Recently, many generalized versions of frames have appeared, e.g. g-frames \cite{sun}, modular frames \cite{fran}, fusion frames \cite{cas6} and operator-valued frames ((OPV)-frames for short) \cite{klz}. Among these, operator-valued frames can be used in quantum communication \cite{bod}, and packet network. So it becomes attractive. In \cite{klz}, the authors generalize many results concerning vector-valued frames in \cite{han1} to the operator-valued setting, including the aspects of dilation, disjointness, parametrization, group representation and etc. In \cite{han2}, the authors present new results on operator-valued frames concerning orthogonal frames, frame representation and dual frames which is complementary to the work in \cite{klz}.

In the present paper, we mainly deal with operator-valued frames in finite dimensional Hilbert spaces which we call finite operator-valued frames. The finite operator-valued frames can be used in quantum communication \cite{bod}. For signals transmitted in packet network, a signal is a vector in a finite dimensional Hilbert space which transmitted in the form of $m$ packets of $l$ linear coefficients. We mension this in details. Let $x$ be a signal in a Hilbert space $H$, and let $\{V_j\}_{j=1}^m$ be a set of operators from $H$ to $K$, satisfying $\sum\limits_{j=1}^m
V_j^*V_j=I$. Then $\{V_j\}_{j=1}^m$ is called the coordinate operators on $H$. From the frame theory view, $\{V_j\}_{j=1}^m$ is just a Parseval operator-valued frame for $H$. In the transmission, the signal $x$ is encoded into $\{V_jx\}_{j=1}^m$ and is sent over network. In this process one can only consider the behaviors of $\{V_j\}_{j=1}^m$. On the other hand, in the general setting, quantum information evolves through an open quantum system via a quantum channel \cite{kr1}. Choi has proved that a quantum channel $\Phi$ must have the form
$\Phi(A)=\sum\limits_{j=1}^m V_j^*AV_j, \forall A\in B(H)$ with $\sum\limits_{j=1}^m V_j^*V_j=1$ (c.f.\cite{choi}) and again $\{V_j\}_{j=1}^m$ is a Parseval operator-valued frame.

In this paper, we will study the finite operator-valued frames including the dilation of operator-valued frames for finite dimensional version, the properties of analysis operators, and the existence of equal-norm Parseval operator-valued frames. Some new results concerning dual frames, robustness of operator-valued frames are presented.  We characterize the optimal Parseval (OPV)-frame under the case which one packet coefficients lost in transmission and construct the (OPV)-frames with a given frame operator.

\section{Review of general operator-valued frames}
Before dealing with finite operator-valued frames, we review operator-valued frames for  general  Hilbert spaces. In \cite{klz}, the authors have studied  operator-valued frames intensively while in \cite{han2} the authors give a more elementary and transparent treatment. So in this paper, we adopt the treatment in \cite{han2}.

\begin{defn}\cite{klz} Let $H$ and $H_j (j\in J)$ be Hilbert spaces, and let $V_j\in B(H,H_j)$. If there exist positive constants $A$ and $B$ such that
$$AI\leq \sum\limits_{j\in J}V_j^*V_j\leq BI.$$
Then $\{V_j\}_{j\in J}$ is called an operator-valued frame ((OPV)-frame) for $H$. It is called Parseval if $A=B=1$ and Bessel if we only require the right side inequality.
\end{defn}

In the study of frame theory, operator theoretic method is the main tools. Analysis operators and frame operators are the most important operators in frame theory.  Let $V_j\in B(H,H_j) (j\in J)$ such that $\{V_j\}_{j\in J}$ be a Bessel (OPV)-frame for $H$. The {\it analysis operator} $\theta_V$ is from $H$ to $\sum\limits_{j\in J}\oplus H_j$ defined by $\theta_V(x)=\{V_jx\}_{j\in J}, \forall x\in H$, where $\sum\limits_{j\in J}\oplus H_j$ is the orthogonal direct sum Hilbert space of $\{H_j\}_{j\in J}$. One can check
$$\theta_V^*(\{\xi_j\}_{j\in J})=\sum\limits_{j\in J}V_j^*(\xi_j).$$
$S:=\theta_V^*\theta_V=\sum\limits_{j\in J}V_j^*V_j$ will be called the {\it frame operator} for $\{V_j\}_{j\in J}$.

The following proposition shows the relations between (OPV)-frames, analysis and frame operators.

\begin{prop} Let $V_j\in B(H,H_j) (j\in J)$ such that $\{V_j\}_{j\in J}$ is a Bessel (OPV)-frame. Then the following are equivalent

(i) $\{V_j\}_{j\in J}$ is an (OPV)-frame for $H$;

(ii) $\theta_V$ is bounded invertible (not necessary "onto" );

(iii) S is onto bounded invertible.
\end{prop}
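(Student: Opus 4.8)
The plan is to prove the three equivalences by establishing the cycle $(i)\Rightarrow(iii)\Rightarrow(ii)\Rightarrow(i)$, relying throughout on the standing hypothesis that $\{V_j\}_{j\in J}$ is already Bessel, so that $\theta_V$ is a well-defined bounded operator and $S=\theta_V^*\theta_V$ is a well-defined bounded positive operator on $H$.

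For $(i)\Rightarrow(iii)$, I would observe that the frame inequality $AI\le S\le BI$ says precisely that $S$ is a positive self-adjoint operator bounded below by $A>0$. A standard functional-calculus argument (or a direct $\varepsilon$-free estimate using $\langle Sx,x\rangle\ge A\|x\|^2$ together with self-adjointness) shows $S$ is injective with closed range, and since $S$ is self-adjoint its range is dense; hence $S$ is a bijection, and by the bounded inverse theorem $S^{-1}\in B(H)$. For $(iii)\Rightarrow(ii)$: writing $S=\theta_V^*\theta_V$, if $S$ is bounded invertible then for every $x$ we have $\|\theta_V x\|^2=\langle Sx,x\rangle\ge \|S^{-1}\|^{-1}\|x\|^2$, so $\theta_V$ is bounded below; being already bounded, $\theta_V$ is therefore injective with closed range, i.e. bounded invertible onto its range (which is what the parenthetical remark "(not necessarily onto)" signals). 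For $(ii)\Rightarrow(i)$: if $\theta_V$ is bounded below, say $\|\theta_V x\|\ge c\|x\|$ with $c>0$, then $\langle Sx,x\rangle=\|\theta_V x\|^2\ge c^2\|x\|^2$, giving the lower frame bound $A=c^2$; the upper bound $B$ is already available from the Bessel assumption. This closes the cycle.

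The only mild subtlety — and the step I would treat most carefully — is the claim in $(i)\Rightarrow(iii)$ that a positive self-adjoint operator bounded below is invertible; the cleanest justification is that $\langle Sx,x\rangle\ge A\|x\|^2$ forces $\|Sx\|\ge A\|x\|$ (Cauchy–Schwarz), which gives injectivity and closedness of the range, while $\overline{\operatorname{ran}S}=(\ker S^*)^\perp=(\ker S)^\perp=H$ gives surjectivity. In the finite-dimensional setting that the paper ultimately cares about this is automatic, but since this proposition is stated for general Hilbert spaces I would phrase it in the operator-theoretic language above. I do not anticipate any genuine obstacle: the proposition is essentially a repackaging of the definition of a frame in terms of $\theta_V$ and $S$, and all implications are short.

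One remark on bookkeeping: throughout I would keep the Bessel hypothesis explicit, since it is what makes $\theta_V$ and $S$ bounded in the first place and supplies the upper bound $B$ for free; without it the equivalence $(ii)\Leftrightarrow(iii)$ still holds but $(i)$ would need to be stated with both inequalities rather than just the lower one. With that understood, the proof is the three-step cycle described above.
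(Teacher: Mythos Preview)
Your proposal is correct and follows essentially the same approach as the paper, which merely records the key identity $\|\theta_V x\|^2=\sum_{j\in J}\|V_jx\|^2=\langle Sx,x\rangle\ge A\|x\|^2$ and declares the equivalences immediate. You have simply spelled out carefully the three implications that the paper leaves implicit, including the operator-theoretic justification that a positive self-adjoint operator bounded below is onto invertible; nothing in your argument deviates from the paper's intended route.
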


\begin{proof} The equivalence can be shown easily by observing
$$\|\theta_V x\|^2=\sum\limits_{j\in J}\|V_jx\|^2=\sum\limits_{j\in J}<V_j^*V_jx,x>\geq A \|x\|^2.$$
\end{proof}

\begin{defn}\cite{han2}
Let $\{V_j\}_{j\in J}$ be an (OPV)-frame for $H$ with $V_j\in B(H,H_j) (j\in J)$. Assume $H_j=Range(V_j)$. If $Range(\theta_V)=\sum\limits_{j\in J}\oplus H_j$, $\{V_j\}_{j\in J}$ will be called a Riesz (OPV)-frame. A Parseval Riesz (OPV)-frame will be called an orthonormal (OPV)-frame.
\end{defn}

Obviously, $\{V_j\}_{j\in J}$ is a Riesz (OPV)-frame if and only if $\theta_V$ is onto bounded invertible and $\{V_j\}_{j\in J}$ is an orthonormal (OPV)-frame if and only if $\theta_V$ is unitary.

The following proposition has appeared in \cite{han2} without proof. Here we give the proof for completeness.

\begin{prop} $\{V_j\}_{j\in J}$ is an orthonormal (OPV)-frame if and only if $\{V_j\}_{j\in J}$ is Parseval and $V_iV_j^*=\delta_{ij}I_{H_j}$ for any $i,j\in J$.
\end{prop}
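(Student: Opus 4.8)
The plan is to reduce everything to the characterization recorded just above the statement: $\{V_j\}_{j\in J}$ is an orthonormal (OPV)-frame exactly when the analysis operator $\theta_V$ is unitary, and it is Parseval exactly when $\theta_V$ is an isometry, i.e. $S=\theta_V^*\theta_V=I_H$. Since a bounded operator is unitary iff it is simultaneously an isometry and a co-isometry, once the Parseval hypothesis is in force the only thing left to match up is the equivalence of $\theta_V\theta_V^*=I_{\sum\oplus H_j}$ with the block relations $V_iV_j^*=\delta_{ij}I_{H_j}$.

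To see that equivalence I would simply compute $\theta_V\theta_V^*$ on a vector $\{\xi_j\}_{j\in J}\in\sum\limits_{j\in J}\oplus H_j$. Using $\theta_V^*(\{\xi_j\})=\sum\limits_{j\in J}V_j^*\xi_j$ (the series converges because $\theta_V^*$ is bounded in the Bessel setting) together with $\theta_V(x)=\{V_ix\}_{i\in J}$ gives
$$\theta_V\theta_V^*(\{\xi_j\}_{j\in J})=\Big\{\sum\limits_{j\in J}V_iV_j^*\,\xi_j\Big\}_{i\in J}.$$
Thus $\theta_V\theta_V^*$ is precisely the operator on $\sum\limits_{j\in J}\oplus H_j$ represented by the block "matrix" $[V_iV_j^*]_{i,j\in J}$, so $\theta_V\theta_V^*=I$ holds iff that block matrix is the identity, namely the diagonal blocks satisfy $V_iV_i^*=I_{H_i}$ and the off-diagonal blocks vanish — equivalently $V_iV_j^*=\delta_{ij}I_{H_j}$. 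To extract the individual block identities from $\theta_V\theta_V^*=I$ it suffices to feed in vectors $\{\xi_j\}$ supported on a single index.

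Assembling the two directions: if $\{V_j\}$ is orthonormal then $\theta_V$ is unitary, so $\theta_V^*\theta_V=I$ yields the Parseval property and $\theta_V\theta_V^*=I$ yields $V_iV_j^*=\delta_{ij}I_{H_j}$; conversely, if $\{V_j\}$ is Parseval then $\theta_V^*\theta_V=I$, and the relations $V_iV_j^*=\delta_{ij}I_{H_j}$ force $\theta_V\theta_V^*=I$ by the computation above, so $\theta_V$ is unitary and hence $\{V_j\}$ is orthonormal.

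I do not expect a real obstacle; the content is the block-operator computation. The only points that deserve a line of care are bookkeeping when $J$ is infinite (convergence of $\sum_j V_j^*\xi_j$, covered by boundedness of $\theta_V^*$), and — if one prefers to verify the Riesz condition $H_j=Range(V_j)$ by hand rather than inherit it from "$\theta_V$ unitary" — the observation that $V_iV_i^*=I_{H_i}$ already makes $V_i$ surjective onto $H_i$, since $\xi=V_i(V_i^*\xi)$ for every $\xi\in H_i$.
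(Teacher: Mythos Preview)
Your proof is correct and follows essentially the same route as the paper: both reduce the claim to ``$\theta_V$ unitary $\Leftrightarrow$ $\theta_V^*\theta_V=I$ and $\theta_V\theta_V^*=I$'', then identify $\theta_V\theta_V^*$ with the block matrix $[V_iV_j^*]_{i,j}$ via the computation $\theta_V\theta_V^*(\{\xi_j\})=\{\sum_j V_iV_j^*\xi_j\}_i$ and test on single-index-supported vectors. Your added remarks on convergence and on $V_iV_i^*=I_{H_i}$ forcing $Range(V_i)=H_i$ are welcome clarifications but do not change the argument.
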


\begin{proof} Let $\{V_j\}_{j\in J}$ be Parseval with $V_iV_j^*=\delta_{ij}I_{H_j}$ for any $i,j\in J$. We immediately get $\theta_V^*\theta_V=I$ and we infer $\theta_V\theta_V^*=I$ from $V_iV_j^*=\delta_{ij}I_{H_j}$. Hence $\theta_V$ is unitary and $\{V_j\}_{j\in J}$ is an orthonormal (OPV)-frame.

Conversely, suppose $\{V_j\}_{j\in J}$ to be an orthonormal (OPV)-frame. Then $\{V_j\}_{j\in J}$ is Parseval and \begin{eqnarray*}
&&\theta_V\theta_V^*(\{\xi_j\}_{j\in J})=\theta(\sum\limits_{j\in J}V_j^*(\xi_j))\\
&=&\{V_i(\sum\limits_{j\in J}V_j^*(\xi_j))\}_{i\in J}=\{\sum\limits_{j\in J}V_iV_j^*\xi_j\}_{i\in J}.
\end{eqnarray*}
Since $\{V_j\}_{j\in J}$ is orthonormal, we get $\theta_V\theta_V^*=I$ and so
$$\sum\limits_{j\in J}V_iV_j^*\xi_j=\xi_i, \forall i\in J.$$
For any $y\in H_i$, choose $\{\xi_j\}_{j\in J}$ with $y$ in the i-th position and zero's in other positions. Then we can see $y=V_iV_i^*y$ and $V_iV_j^*=\delta_{ij}I_{H_i}$. The proof is finished.
\end{proof}

The following results are easy to be checked.
\begin{prop}
Let $\{V_j\}_{j\in J}$ be an (OPV)-frame for $H$ with the frame operator $S$. Then $\{V_jS^{-\frac{1}{2}}\}_{j\in J}$ is a Parseval (OPV)-frame. When $\{V_j\}_{j\in J}$ is a Riesz (OPV)-frame, $\{V_jS^{-\frac{1}{2}}\}_{j\in J}$ is an orthonormal (OPV)-frame.
\end{prop}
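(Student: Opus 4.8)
The plan is to verify both claims through the analysis operator, which sidesteps any convergence bookkeeping with the direct sum $\sum_{j\in J}\oplus H_j$. First I would record the facts about $S$ that I need: by the proposition above, $S=\theta_V^*\theta_V$ is a positive, bounded, onto invertible operator on $H$, so its positive square root $S^{1/2}$ and the inverse $S^{-1/2}$ are again positive, bounded, self-adjoint and invertible. In particular each $W_j:=V_jS^{-\frac{1}{2}}$ lies in $B(H,H_j)$, the family $\{W_j\}_{j\in J}$ is at least Bessel, and its analysis operator is $\theta_W=\theta_V S^{-\frac{1}{2}}$.

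Next I would compute the frame operator of $\{W_j\}_{j\in J}$. Since $\theta_W=\theta_V S^{-\frac{1}{2}}$ and $S^{-\frac{1}{2}}$ is bounded and self-adjoint,
$$\theta_W^*\theta_W = S^{-\frac{1}{2}}\,\theta_V^*\theta_V\, S^{-\frac{1}{2}} = S^{-\frac{1}{2}} S S^{-\frac{1}{2}} = I,$$
so the frame operator of $\{W_j\}_{j\in J}$ is the identity; equivalently $\sum_{j\in J}W_j^*W_j=I$, and $\{W_j\}_{j\in J}$ is Parseval. (If one wishes to argue with the sum directly, the same identity follows from $\sum_j W_j^*W_j = S^{-\frac{1}{2}}\big(\sum_j V_j^*V_j\big)S^{-\frac{1}{2}}$, the interchange being legitimate because $S^{-\frac{1}{2}}$ is bounded; but the analysis-operator route is cleaner.)

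For the Riesz statement, recall that $\{V_j\}_{j\in J}$ being a Riesz (OPV)-frame means $H_j=Range(V_j)$ and $\theta_V$ is onto bounded invertible. Since $S^{-\frac{1}{2}}:H\to H$ is onto bounded invertible, the composition $\theta_W=\theta_V S^{-\frac{1}{2}}$ is again onto bounded invertible, and together with $\theta_W^*\theta_W=I$ this forces $\theta_W$ to be unitary. It remains to check that the ranges are unchanged, so that $\{W_j\}_{j\in J}$ qualifies as an orthonormal (OPV)-frame in the sense of the definition: $Range(W_j)=Range(V_jS^{-\frac{1}{2}})=Range(V_j)=H_j$ because $S^{-\frac{1}{2}}$ is a bijection of $H$. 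Hence $\theta_W$ unitary yields that $\{W_j\}_{j\in J}$ is an orthonormal (OPV)-frame.

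There is no serious obstacle here; the only points needing a word of care are the legitimacy of moving the bounded operator $S^{-\frac{1}{2}}$ in and out of the (possibly infinite) sum — handled by passing to $\theta_W=\theta_V S^{-\frac{1}{2}}$ — and the observation that right multiplication by the bijection $S^{-\frac{1}{2}}$ preserves each $Range(V_j)$, which is what keeps the Riesz/orthonormal bookkeeping consistent with the definition.
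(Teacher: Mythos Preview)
Your argument is correct and complete; the paper itself omits the proof entirely, introducing the proposition with ``The following results are easy to be checked,'' so your write-up simply supplies the routine verification via the analysis operator that the authors left implicit.
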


The following dilation theorem comes from \cite{han2}.
\begin{thm}
Let $\{V_j\}_{j\in J}$ be a Parseval (OPV)-frame for $H$. Then there exists a Hilbert space $K\supseteq H$ and $W_j\in B(K,H_j)$ such that $\{W_j\}_{j\in J}$ is an orthonormal (OPV)-frame for $K$ and $V_j=W_j|_H$ (or $W_j=V_jP$, where $P$ is the orthogonal  projection from $K$ onto $H$).
\end{thm}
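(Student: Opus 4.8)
The plan is to mimic the classical vector-frame dilation argument of Han--Larson, now executed at the level of analysis operators. Since $\{V_j\}_{j\in J}$ is a Parseval (OPV)-frame, the analysis operator $\theta_V\colon H\to\sum_{j\in J}\oplus H_j$ satisfies $\theta_V^*\theta_V=S=I$, so $\theta_V$ is an isometry and $P:=\theta_V\theta_V^*$ is the orthogonal projection of $\mathcal{K}_0:=\sum_{j\in J}\oplus H_j$ onto the closed subspace $\theta_V(H)$. I would simply \emph{define} $K:=\mathcal{K}_0=\sum_{j\in J}\oplus H_j$ and identify $H$ with its isometric image $\theta_V(H)\subseteq K$; under this identification the inclusion $H\hookrightarrow K$ is exactly $\theta_V$, and the orthogonal projection $K\to H$ is exactly $\theta_V^*$ (after the identification, i.e. $\theta_V^*$ followed by $\theta_V$, read as a map onto the copy of $H$ inside $K$).

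Next I would produce the candidate orthonormal (OPV)-frame on $K$. The obvious choice is $W_j:=\pi_j\in B(K,H_j)$, the $j$-th coordinate projection of the orthogonal direct sum, i.e. $W_j(\{\xi_i\}_{i\in J})=\xi_j$. One checks immediately that $W_j^*\colon H_j\to K$ is the inclusion of $H_j$ into the $j$-th summand, that $W_iW_j^*=\delta_{ij}I_{H_j}$, and that $\sum_{j\in J}W_j^*W_j=I_K$ (the latter being the statement that the coordinate projections of a direct sum resolve the identity); hence by Proposition~2.4 (the characterization of orthonormal (OPV)-frames as Parseval frames with $V_iV_j^*=\delta_{ij}I$) the family $\{W_j\}_{j\in J}$ is an orthonormal (OPV)-frame for $K$. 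It remains to verify the compression relation. For $x\in H$, viewing $x$ as the element $\theta_V x\in K$, we get $W_j(\theta_V x)=\pi_j(\{V_ix\}_{i\in J})=V_jx$, which is precisely $V_j=W_j|_H$; equivalently $W_jP=W_j\theta_V\theta_V^*=\pi_j\theta_V\theta_V^*$, and since $\pi_j\theta_V=V_j$ this equals $V_j\theta_V^*$, i.e. $W_jP$ acts as $V_j$ under the identification, giving $W_j=V_jP$ in the stated sense.

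I would also record explicitly why $K\supseteq H$ isometrically rather than merely by an abstract embedding: the point is that $\theta_V$ is an isometry onto $\theta_V(H)$, so replacing $H$ by $\theta_V(H)$ loses no structure, and then genuinely $\theta_V(H)\subseteq K$ as a subspace with the restricted inner product. If one prefers to keep $H$ literally as a summand one can instead take $K=H\oplus H^\perp$ where $H^\perp$ is a copy of $(\theta_V(H))^\perp\subseteq\mathcal{K}_0$, but the bookkeeping is the same.

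The argument is essentially formal once the identifications are set up correctly, so I do not expect a genuine obstacle; the only delicate point — and the one I would be most careful to state cleanly — is the dictionary between "$V_j=W_j|_H$" (which requires treating $H$ as a literal subspace of $K$ via $\theta_V$) and "$W_j=V_jP$" (which requires reading $V_j$ as a map out of that subspace, i.e. precomposed with $\theta_V^{-1}=\theta_V^*|_{\theta_V(H)}$). Making that identification precise, and checking $\sum_{j\in J}\pi_j^*\pi_j=I_K$ in the possibly-infinite-index case (so that the series converges in the strong operator topology, which it does since for each $\xi\in K$ only the norm is involved: $\sum_j\|\pi_j\xi\|^2=\|\xi\|^2$), are the two things I would write out in full. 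Everything else reduces to Proposition~2.4 applied to $\{W_j\}_{j\in J}=\{\pi_j\}_{j\in J}$.
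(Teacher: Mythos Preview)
The paper does not actually prove this theorem; it is stated with the attribution ``The following dilation theorem comes from \cite{han2}'' and no argument is supplied. So there is nothing to compare against directly.

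Your proof is correct and is exactly the standard Han--Larson dilation transported to the operator-valued setting: take $K=\sum_{j\in J}\oplus H_j$, identify $H$ with the isometric image $\theta_V(H)$ via the Parseval condition $\theta_V^*\theta_V=I$, and let $W_j=\pi_j$ be the coordinate projections. The analysis operator of $\{\pi_j\}$ is the identity on $K$, so $\{\pi_j\}$ is an orthonormal (OPV)-frame by the paper's Proposition characterizing such frames via $W_iW_j^*=\delta_{ij}I_{H_j}$; and $\pi_j\theta_V=V_j$ gives the restriction relation. Your remark on SOT convergence of $\sum_j\pi_j^*\pi_j$ in the infinite-index case is the only analytic point, and it is handled correctly.

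Your caution about the dictionary between ``$V_j=W_j|_H$'' and the parenthetical ``$W_j=V_jP$'' is well placed: read literally, $W_j=V_jP$ would force $W_j$ to vanish on $H^\perp$, whence $\sum_j W_j^*W_j=P\neq I_K$, contradicting Parseval on $K$. The intended content is the restriction relation you verify, and you are right to spell out the identification $\theta_V^*|_{\theta_V(H)}=\theta_V^{-1}$ that makes the two formulations agree on the copy of $H$ inside $K$.
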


\section{(OPV)-frames for finite dimensional Hilbert space}

When the dimension of $H$ is $n<\infty$, we identify $H$ with $\Bbb{R}^n$ or $\Bbb{C}^n$ depending on whether we are dealing with the real or complex case. We  often choose an orthonormal basis and regard vectors as columns and operators as matrices. In this paper, when we say $\{V_j\}_{j=1}^m$ a finite (OPV)-frame, that is $\{V_j\}_{j=1}^m$ is an (OPV)-frame with $dim(H)=n<\infty, dim(H_j)=l_j<\infty, j=1,2,\cdots,m$, where $dim$ denotes the dimension of a Hilbert space. Also we always let $l:=\sum\limits_{j=1}^m l_j$.

In finite dimensional case, the analysis operator $\theta_V$ for $\{V_j\}_{j=1}^m$ is a $l\times n$ matrix and we write $\theta_V$ as $\left[\begin{array}{cccc}V_1\\ V_2\\ \vdots\\ V_m\end{array}\right]$.

$\{V_j\}_{j=1}^m$ is an (OPV)-frame if and only if $\theta_V$ is full column rank.

Assuming $Range(V_j)=H_j, j=1,2,\cdots,m$, $\{V_j\}_{j=1}^m$ is a Risze (OPV)-frame if and only if $\theta_V$ has full column rank. In this case we must have $l=n$.

$\{V_j\}_{j=1}^m$ is a Parseval (OPV)-frame if and only if $\theta_V$ is column orthogonal ($\theta_V^*\theta_V=I$).

$\{V_j\}_{j=1}^m$ is an orthonormal (OPV)-frame if and only if $\theta_V$ is an unitary matrix.

The following proposition can be viewed as the finite version of dilation theorem.

\begin{thm}
Let $V_j\in B(H,H_j), j=1,2,\cdots,m$. $\{V_j\}_{j=1}^m$ is a parseval (OPV)-frame if and only if there exist matrices $V_1',V_2',\cdots,V_m'$ where $V_j'$ is a $l_j\times (l-n)$ matrix such that $\{[V_j,V_j']\}_{j=1}^m$ is an orthonormal (OPV)-frame.
\end{thm}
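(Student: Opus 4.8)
The plan is to reformulate the statement entirely in terms of analysis operators and then reduce it to the elementary fact that an orthonormal list of vectors in a finite-dimensional inner-product space extends to an orthonormal basis. I would start from the characterizations already recorded above: the analysis operator $\theta_V$ of a finite (OPV)-frame $\{V_j\}_{j=1}^m$ is the $l\times n$ matrix obtained by stacking the blocks $V_1,\dots,V_m$ vertically; $\{V_j\}_{j=1}^m$ is Parseval exactly when $\theta_V^*\theta_V=I_n$, i.e.\ $\theta_V$ has orthonormal columns (which in particular forces $n\le l$); and a finite (OPV)-frame is orthonormal exactly when its analysis operator is a unitary matrix. The structural remark that makes everything work is this: if for each $j$ we are given an $l_j\times(l-n)$ matrix $V_j'$ and we let $\theta_{V'}$ be the $l\times(l-n)$ matrix obtained by stacking $V_1',\dots,V_m'$ vertically, then the analysis operator of $\{[V_j,V_j']\}_{j=1}^m$ is precisely the $l\times l$ matrix $[\,\theta_V\ \ \theta_{V'}\,]$ obtained by placing the two column-blocks side by side. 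Granting this, the theorem becomes: an $l\times n$ matrix with orthonormal columns can be completed to an $l\times l$ unitary matrix by adjoining $l-n$ columns, and conversely the first $n$ columns of a unitary $l\times l$ matrix are orthonormal.

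For the direction ``$\Rightarrow$'', assuming $\{V_j\}_{j=1}^m$ is Parseval I would take an orthonormal basis $u_1,\dots,u_{l-n}$ of the orthogonal complement of $Range(\theta_V)$ in $\Bbb{C}^l$ (resp.\ $\Bbb{R}^l$), form the matrix $\theta_{V'}$ having these as its columns, so that $[\,\theta_V\ \ \theta_{V'}\,]$ is unitary, and then cut the rows of $\theta_{V'}$ into consecutive blocks of sizes $l_1,\dots,l_m$, calling the $j$-th block $V_j'$. Setting $W_j:=[V_j,V_j']$, the structural remark identifies $\theta_W$ with $[\,\theta_V\ \ \theta_{V'}\,]$, which is unitary, so $\{W_j\}_{j=1}^m$ is an orthonormal (OPV)-frame. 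For ``$\Leftarrow$'', if such $V_j'$ exist with $\{[V_j,V_j']\}_{j=1}^m$ orthonormal, then $[\,\theta_V\ \ \theta_{V'}\,]$ is unitary, hence has orthonormal columns, and restricting to the first $n$ of them gives $\theta_V^*\theta_V=I_n$, i.e.\ $\{V_j\}_{j=1}^m$ is Parseval.

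I do not expect a serious obstacle here; the only two points that need genuine care are the block-partition bookkeeping in the structural remark — verifying that juxtaposing $\theta_V$ and $\theta_{V'}$ really is the stacked analysis matrix of the row-concatenations $[V_j,V_j']$ — and the trivial but necessary observation that $l-n\ge 0$ for a Parseval frame, so that the completion step is meaningful. An alternative route would be to deduce the result from the general dilation Theorem quoted earlier, noting that an \emph{orthonormal} dilation forces the dilation space $K$ to have dimension $l=\sum_j l_j$, and then writing $W_j=V_jP$ in block form relative to the decomposition $K=H\oplus H^\perp$; but the direct matrix-completion argument above is shorter and self-contained.
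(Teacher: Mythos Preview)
Your proposal is correct and follows essentially the same approach as the paper: both reduce the statement to the identification of the analysis operator of $\{[V_j,V_j']\}_{j=1}^m$ with the block matrix $[\,\theta_V\ \theta_{V'}\,]$, and then invoke the elementary fact that an $l\times n$ column-orthonormal matrix extends to an $l\times l$ unitary. Your write-up is somewhat more explicit about the completion step (choosing an orthonormal basis of $Range(\theta_V)^\perp$) and about the bookkeeping, but there is no substantive difference.
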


\begin{proof} ($\Leftarrow$). Let $\theta=\left[\begin{array}{cccc} V_1 & V_1'\\ \vdots &\vdots\\ V_m & V_m'\end{array}\right]$. Obviously $\theta$ is the analysis operator for $\{[V_j,V_j']\}_{j=1}^m$. Since $\{[V_j,V_j']\}_{j=1}^m$ is an orthonormal (OPV)-frame. We get $\theta$ is unitary. $\theta$ can be writed as $\theta=[\theta_V,\theta_V']$ where $\theta_V,\theta_V'$ are the analysis operators for $\{V_j\}_{j=1}^m,\{V_j'\}_{j=1}^m$ respectively, and so $\theta_V$ is column orthogonal and $\{V_j\}_{j=1}^m$ is parseval.

($\Rightarrow$). Since $\{V_j\}_{j=1}^m$ is Parseval, we get $\theta_V^*\theta_V=I$, that is, $\theta_V$ is collum orthogonal. From matrix theory, we know $\theta_V$ can be extended to a $l\times l$ unitary matrix $\theta$.
\end{proof}

\begin{prop}
Let $V_j\in B(H,H_j)$ such that $\{V_j\}_{j=1}^m$ is a Parseval (OPV)-frame for $H$ and Let $P$ be a projection on $H$. Then $\{V_jP\}_{j=1}^m$ is a Parseval (OPV)-frame for $P(H)$.
\end{prop}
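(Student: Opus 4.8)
The plan is to work with the compressions of the $V_j$ to the subspace $P(H)$ and to compute the associated frame operator directly, reducing everything to the Parseval identity $\sum_{j=1}^m V_j^*V_j = I_H$ together with $P^2 = P = P^*$. The one point that requires care is bookkeeping of domains and codomains: the family $\{V_jP\}_{j=1}^m$, viewed as operators on all of $H$, has frame operator $\sum_j (V_jP)^*(V_jP) = P\bigl(\sum_j V_j^*V_j\bigr)P = P$, which is a projection rather than $I_H$; the correct statement is that once we regard these operators as acting on the Hilbert space $P(H)$, their frame operator becomes the identity of $P(H)$.

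Concretely, first I would set $W_j := V_j|_{P(H)} \in B(P(H), H_j)$, so that $W_j = V_jP$ after identifying $P(H)$ with a closed subspace of $H$. Next I would identify the adjoint $W_j^* \in B(H_j, P(H))$: for $x \in P(H)$ and $\xi \in H_j$ one has $\langle W_j x, \xi\rangle_{H_j} = \langle V_j x, \xi\rangle = \langle x, V_j^*\xi\rangle_H = \langle x, PV_j^*\xi\rangle_H$, using $x = Px$ and $P^* = P$; hence $W_j^*\xi = PV_j^*\xi$. Then I would compute the frame operator of $\{W_j\}_{j=1}^m$ on $P(H)$: for $x \in P(H)$,
\[
\sum_{j=1}^m W_j^*W_j x = \sum_{j=1}^m PV_j^*V_j x = P\Bigl(\sum_{j=1}^m V_j^*V_j\Bigr)x = PI_H x = Px = x,
\]
so $\sum_{j=1}^m W_j^*W_j = I_{P(H)}$, which by the defining inequality with $A=B=1$ means $\{V_jP\}_{j=1}^m$ is a Parseval (OPV)-frame for $P(H)$.

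Alternatively, in the finite-dimensional setting one may argue via analysis operators: $\theta_{VP} = \theta_V P$, and since $\{V_j\}_{j=1}^m$ is Parseval we have $\theta_V^*\theta_V = I_H$, whence $(\theta_V P)^*(\theta_V P) = P\theta_V^*\theta_V P = P^2 = P$; restricting to $P(H)$ this is exactly column-orthogonality of the analysis operator of $\{V_jP\}_{j=1}^m$ as a map out of $P(H)$, which is the Parseval condition recorded in Section 3. There is no real obstacle here beyond the domain/codomain subtlety noted above; the computation itself is immediate from $\sum_j V_j^*V_j = I$ and idempotency of $P$.
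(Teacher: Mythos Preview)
Your proof is correct and follows essentially the same route as the paper: compute $\sum_{j=1}^m (V_jP)^*(V_jP) = P\bigl(\sum_j V_j^*V_j\bigr)P = P$, which is the identity on $P(H)$. You are more careful than the paper about the domain/codomain bookkeeping (identifying $W_j^* = PV_j^*$ as a map into $P(H)$), but the underlying computation is identical.
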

\begin{proof} The result follows from
\begin{eqnarray*}
&&\sum\limits_{j=1}^m (V_jP)^*(V_jP)=\sum\limits_{j=1}^m P^*V_j^*V_jP\\
&=&P^*\sum\limits_{j=1}^m V_j^*V_jP=P.
\end{eqnarray*}

\end{proof}

Let $\{V_j\}_{j=1}^m$ be an (OPV)-frame for $H$ with $V_j\in B(H,H_j), j=1,2,\cdots,m$. $\{W_j\}_{j=1}^m$  will be called {\it  dual} to $\{V_j\}_{j=1}^m$ if $\sum\limits_{j=1}^mW_j^*V_j=I$, i.e. $\theta_W^*\theta_V=I$. The following remarks are the generalizations of  the counterparts in the vector-valued frame theory.

$\bullet$ $\{V_jS^{-1}\}_{j=1}^m$ is dual to $\{V_j\}_{j=1}^m$ which is called the  canonical dual to $\{V_j\}_{j=1}^m$.

$\bullet$ The analysis operator for $\{V_jS^{-1}\}_{j=1}^m$ is $\widetilde{\theta}:=\theta S^{-1}$. Let $\theta^\dag=\widetilde{\theta}^*$. The $\theta^\dag$ is  the pseudo-inverse for $\theta$.

$\bullet$ Let $\theta$ be the analysis operator for (OPV)-frame $\{V_j\}_{j=1}^m$. $G:=\theta\theta^*$ will be called the Grammian matrix for $\{V_j\}_{j=1}^m$. Denote the Hilbert-Schimidt norm  by $\|\cdot\|_F$. We have
$$tr(G)=\sum\limits_{i=1}^m\|V_i\|_F^2=\sum\limits_{k=1}^l\lambda_k,$$
where $\lambda_k's$ are the eigenvalues for $G$.

$\bullet$ Let $\{V_j\}_{j=1}^m$ be a tight (OPV)-frame for $H$ with frame bound $A$. We have
$$nA=\sum\limits_{k=1}^n\lambda_k=\sum\limits_{i=1}^m\|V_i\|_F^2,$$ where $n$ is the dimension of $H$ and $\lambda_k's$ are the eigenvalues of $S$.

$\bullet$ Let $\{V_j\}_{j=1}^m$ be an (OPV)-frame. If $\|V_j\|_F=c$, $\forall j=1,2,\cdots,m$ for some $c>0$, then we call $\{V_j\}_{j=1}^m$ an {\it equal-norm (OPV)-frame}. Let $\{V_j\}_{j=1}^m$ be and equal-norm tight frame, we have
$$nA=\sum\limits_{k=1}^n\lambda_k=\sum\limits_{i=1}^m\|V_i\|_F^2=mc^2,$$
and in this case, $A=\frac{m}{n}c^2$.

$\bullet$ Let $\{V_j\}_{j=1}^m$ be an equal-norm Parseval frame. We get
$$n=\sum\limits_{k=1}^n\lambda_k=\sum\limits_{i=1}^m\|V_i\|_F^2=mc^2.$$
In addition, if $c=\sqrt{\frac{l}{m}}$, then $l=n$ and $\{V_j\}_{j=1}^m$ becomes an orthonormal (OPV)-frame.

In the following we give examples to show the existences of equal-norm Parseval (OPV)-frame and orthonormal (OPV)-frame.
Let $\{c_k\}_{k=1}^n$ be distinct $l$-th roots of unity. Then we have
\begin{eqnarray*}
&&\sum\limits_{i=1}^{l-1}c_k^i=0,\forall k=1,2,\cdots,n;\\
&&\sum\limits_{i=1}^{l-1}|c_k^i|^2=l, \forall k=1,2,\cdots,n;\\
&&\sum\limits_{i=0 \atop k\neq j}^{l-1}(c_kc_j)^i=0, \forall k,j=1,2,\cdots,n;\\
&&\sum\limits_{i=0}^{l-1}(c_k\overline{c_j})^i=l\delta_{k,j}, \forall k,j=1,2,\cdots,n.
\end{eqnarray*}

{\bf Example 1.} In this example, we construct an equal-norm Parseval (OPV)-frame for $n-$dimensional Hilbert space $H$.

Let $\{c_k\}_{k=1}^n$ be distinct $l-$th roots of unity. Take
$$V_1=\left[\begin{array}{cccc}c_1^0 & c_2^0 & \cdots & c_n^0\\
c_1^1 & c_2^1 & \cdots & c_n^1\\
 &\cdots & \cdots &\\
 c_1^{l_1-1} & c_2^{l_1-1} &\cdots &c_n^{l_1-1}
 \end{array}\right]$$
 $$V_2=\left[\begin{array}{cccc}c_1^{l_1} &c_2^{l_1} &\cdots &c_n^{l_1}\\
 c_1^{l_1+2} &c_2^{l_1+2} &\cdots &c_n^{l_1+2}\\
 &\cdots &\cdots &\\
 c_1^{l_1+l_2-1} &c_2^{l_1+l_2-1} &\cdots &c_n^{l_1+l_2-1}\\
 \end{array}\right]$$
 $$V_3=\left[\begin{array}{cccc} c_1^{l_1+l_2} &c_2^{l_1+l_2} &\cdots &c_n^{l_1+l_2}\\
 c_1^{l_1+l_2+2} &c_2^{l_1+l_2+2} &\cdots &c_n^{l_1+l_2+2}\\
 &\cdots &\cdots &\\
 c_1^{l_1+l_2+l_3-1} &c_2^{l_1+l_2+l_3-1} &\cdots & c_n^{l_1+l_2+l_3-1}
\end{array}\right]$$
with $l=l_1+l_2+l_3$.

We can see $\{\frac{1}{\sqrt{l}}V_1,\frac{1}{\sqrt{l}}V_2,\frac{1}{\sqrt{l}}V_3\}$ is an equal-norm Parseval (OPV)-frame.

{\bf Example 2.} Let $\{e_1,e_2,\cdots,e_n\}$ be an orthonormal basis for $H$ and let $H_j=span\{e_j\}$. $U_j:H\rightarrow H_j$ is the orthogonal projection on $H$. Let $\theta_U=\left[\begin{array}{cccc}U_1\\
U_2\\ \vdots\\ U_n\end{array}\right]$. We have $\theta\theta^*=\theta^*\theta=U_1^2+\cdots +U_n^2=I$. In addition $\|U_j\|_F=1, j=1,2,\cdots,n$. Thus $\{U_j\}_{j=1}^n$ is an orthonormal (OPV)-frame.

\vskip 2mm

Let $\{U_j\}_{j=1}^m$ and $\{V_j\}_{j=1}^m$ be two (OPV)-frames. If there is a onto invertible operator $T$ such that $U_j=V_jT, j=1,2,\cdots,m$ then we say $\{U_j\}_{j=1}^m$ and $\{V_j\}_{j=1}^m$ are {\it similar}. If $T$ is unitary, then we say they are {\it unitarily equivalent}.

Following we show two orthonormal (OPV)-frames are unitarily equivalent. Let $U_j,V_j\in B(H,H_j)$ such that $\{U_j\}_{j=1}^m,\{V_j\}_{j=1}^m$ are orthonormal (OPV)-frames. Then $Range(\theta_U)=Range(\theta_V)$. So there is a onto invertible operator $T$ such that $U_j=V_jT,j=1,2,\cdots,m$.
$$\theta_U=\left[\begin{array}{cccc} U_1\\ \vdots\\ U_m\end{array}\right]=
\left[\begin{array}{cccc}V_1T\\ \vdots\\ V_mT\end{array}\right].$$
So
\begin{eqnarray*}
&&I=\theta_U^*\theta_U=T^*V_1^*V_1T+\cdots+ T^*V_m^*V_mT\\
&=&T^*(V_1^*V_1+\cdots +V_m^*V_m)T=T^*T,
\end{eqnarray*}
and thus $T$ is unitary.

\begin{thm}
Let $\{V_j\}_{j=1}^m, \{W_j\}_{j=1}^m$ be two (OPV)-frames, which are similar. If $\{W_j\}_{j=1}^m$ is an equal-norm tight frame, then $\{V_jS^{-\frac{1}{2}}\}_{j=1}^m$ is an equal-norm Parseval (OPV)-frame where $S$ is  the frame operator for $\{V_j\}_{j=1}^m$.
\end{thm}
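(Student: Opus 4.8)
The plan is to reduce everything to one observation: similarity turns the tight frame operator of $\{W_j\}_{j=1}^m$ into a scalar multiple of $T^*T$, where $T$ is the intertwining operator, and then the factor $T$ appearing in $V_j=W_jT$ is cancelled, up to a fixed unitary, by $S^{-1/2}$. Since we already know from the proposition stating that $\{V_jS^{-1/2}\}_{j=1}^m$ is a Parseval (OPV)-frame whenever $\{V_j\}_{j=1}^m$ is an (OPV)-frame with frame operator $S$, the only thing left to check is that $\{V_jS^{-1/2}\}_{j=1}^m$ is equal-norm, i.e.\ that $\|V_jS^{-1/2}\|_F$ does not depend on $j$.

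First I would fix notation for the similarity: there is an onto invertible $T\in B(H)$ with $V_j=W_jT$ for every $j$ (if the defining relation runs the other way, replace $T$ by $T^{-1}$). Let $A>0$ be the frame bound of the tight frame $\{W_j\}_{j=1}^m$, so $\sum_{j=1}^m W_j^*W_j=AI$, and let $c>0$ be its common Frobenius norm, $\|W_j\|_F=c$. Then the frame operator of $\{V_j\}_{j=1}^m$ is
$$S=\sum_{j=1}^m V_j^*V_j=T^*\Big(\sum_{j=1}^m W_j^*W_j\Big)T=A\,T^*T,$$
which is positive invertible (as it must be, $\{V_j\}_{j=1}^m$ being an (OPV)-frame), and $S^{-1/2}=A^{-1/2}(T^*T)^{-1/2}$.

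Next I would invoke the polar decomposition $T=U|T|$ with $|T|=(T^*T)^{1/2}$ and $U=T(T^*T)^{-1/2}$ unitary (legitimate because $T$ is invertible, so that $U^*U=UU^*=I$). Then $TS^{-1/2}=A^{-1/2}T(T^*T)^{-1/2}=A^{-1/2}U$, and therefore
$$V_jS^{-1/2}=W_j\,TS^{-1/2}=A^{-1/2}\,W_jU,\qquad j=1,2,\dots,m.$$
Since the Frobenius norm is invariant under right multiplication by a unitary, $\|V_jS^{-1/2}\|_F=A^{-1/2}\|W_jU\|_F=A^{-1/2}\|W_j\|_F=A^{-1/2}c$, independent of $j$. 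Hence $\{V_jS^{-1/2}\}_{j=1}^m$ is equal-norm; being also Parseval by the proposition recalled above, it is an equal-norm Parseval (OPV)-frame.

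I do not anticipate a real obstacle; the only points demanding a little care are the direction of the similarity relation and the identity $S=A\,T^*T$, which is what makes the cancellation work. One can even avoid mentioning the polar decomposition explicitly by writing $\|V_jS^{-1/2}\|_F^2=\operatorname{tr}\big(S^{-1/2}V_j^*V_jS^{-1/2}\big)=A^{-1}\operatorname{tr}\big((T^*T)^{-1/2}T^*W_j^*W_jT(T^*T)^{-1/2}\big)$ and then using the cyclicity of the trace to collapse this to $A^{-1}\operatorname{tr}(W_j^*W_j)=A^{-1}c^2$; this is arguably the cleanest route and makes the independence of $j$ manifest.
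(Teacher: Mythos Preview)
Your proof is correct and follows essentially the same route as the paper: both arguments identify the intertwiner between $\{W_j\}$ and $\{V_jS^{-1/2}\}$ as a scalar multiple of a unitary and then use unitary invariance of the Frobenius norm (equivalently, cyclicity of the trace) to conclude equal-norm. Your trace-cyclicity alternative at the end is in fact exactly the computation the paper carries out.
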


\begin{proof}
Obviously, $\{V_jS^{-\frac{1}{2}}\}_{j=1}^m$ is Parseval. From $\{W_j\}_{j=1}^m$ similar to $\{V_j\}_{j=1}^m$, we know $\{W_j\}_{j=1}^m$ is similar to $\{V_jS^{-\frac{1}{2}}\}_{j=1}^m$. So there exists a onto invertible operator $T$ such that $W_j=V_jS^{-\frac{1}{2}}T, j=1,2,\cdots,m$.

Let $A$ be the frame bound for $\{W_j\}_{j=1}^m$. We have
\begin{eqnarray*}
AI&=&\sum\limits_{j=1}^m (V_jS^{-\frac{1}{2}}T)^*(V_jS^{-\frac{1}{2}}T)\\
&=&\sum\limits_{j=1}^m T^*S^{-\frac{1}{2}}V_j^*V_jS^{-\frac{1}{2}}T\\
&=&T^*T.
\end{eqnarray*}
We also observe $T$ is onto and thus we know $\frac{T}{\sqrt{A}}$ is unitary.

In order to prove $\{V_jS^{-\frac{1}{2}}\}_{j=1}^m$ is an equal-norm (OPV)-frame, we only need to note the following equalities.

\begin{eqnarray*}
&&\|V_jS^{-\frac{1}{2}}\|_F^2=\|W_jT^{-1}\|_F^2\\
&=& tr[(W_jT^{-1})^*(W_jT^{-1})]\\
&=&tr(T^{-1*}W_j^*W_jT^{-1})\\
&=&tr(\frac{1}{A} TW_j^*W_jT^{-1})\\
&=& tr(\frac{1}{A} W_j^*W_j)=\frac{1}{A}\|W_j\|_F^2.
\end{eqnarray*}
So, $\{V_jS^{-\frac{1}{2}}\}_{j=1}^m$ is an equal-norm Parseval (OPV)-frame.
\end{proof}

Now we turn to study dual (OPV)-frames. Using dual frames one can decode the signal from the receiver. Let $\{V_j\}_{j=1}^m$ be an (OPV)-frame and let $\{W_j\}_{j=1}^m$ be dual to $\{V_j\}_{j=1}^m$. $\{V_j(x)\}_{j=1}^m$ is the encoded version of $x$ and we can decode it by $x=\sum\limits_{j=1}^m W_j^*V_j(x)$.

\begin{prop}
Let $V_j\in B(H,H_j),j=1,2,\cdots,m$, such that $\{V_j\}_{j=1}^m$ is a Parseval (OPV)-frame for $H$. Then the only Parseval dual (OPV)-frame for $\{V_j\}_{j=1}^m$ is $\{V_j\}_{j=1}^m$ itself.
\end{prop}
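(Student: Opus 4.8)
The plan is to exploit the fact that for a Parseval frame the analysis operator $\theta_V$ is an isometry, so $\theta_V^*\theta_V = I$, and to show that any dual $\{W_j\}_{j=1}^m$ that is itself Parseval must have analysis operator equal to $\theta_V$. Concretely, suppose $\{W_j\}_{j=1}^m$ is a Parseval dual, with analysis operator $\theta_W$. Being a dual means $\theta_W^*\theta_V = I$, and being Parseval means $\theta_W^*\theta_W = I$. I would first observe that, since $\theta_V$ is an isometry with range $M := \mathrm{Range}(\theta_V)$, the projection onto $M$ is $P_M = \theta_V\theta_V^*$. I would then compute how $\theta_W$ sits relative to this decomposition.

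The key step is to write $\theta_W = \theta_V\theta_V^*\theta_W + (I - \theta_V\theta_V^*)\theta_W$, i.e. split $\theta_W$ into its $M$-component and its $M^\perp$-component. From the dual condition $\theta_W^*\theta_V = I$ we get, taking adjoints, $\theta_V^*\theta_W = I$, hence $\theta_V\theta_V^*\theta_W = \theta_V$. So $\theta_W = \theta_V + R$ where $R := (I - \theta_V\theta_V^*)\theta_W$ has range inside $M^\perp$, and in particular $\theta_V^*R = 0$. Now I would plug this into the Parseval condition for $\{W_j\}$:
\begin{eqnarray*}
I = \theta_W^*\theta_W = (\theta_V + R)^*(\theta_V + R) = \theta_V^*\theta_V + \theta_V^*R + R^*\theta_V + R^*R = I + R^*R,
\end{eqnarray*}
using $\theta_V^*\theta_V = I$, $\theta_V^*R = 0$, and $R^*\theta_V = (\theta_V^*R)^* = 0$. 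Therefore $R^*R = 0$, which forces $R = 0$, so $\theta_W = \theta_V$, and comparing blocks gives $W_j = V_j$ for every $j$.

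I do not expect a serious obstacle here; the argument is a direct computation once one decomposes $\theta_W$ against the range of $\theta_V$. The only point that needs a word of care is the justification that $\theta_V^*\theta_W = I$ follows from $\theta_W^*\theta_V = I$ — this is just taking adjoints of the operator identity $\sum_j W_j^*V_j = I$, giving $\sum_j V_j^*W_j = I$. Everything else is finite-dimensional linear algebra, and the conclusion $R^*R = 0 \Rightarrow R = 0$ is immediate. One could alternatively phrase the whole thing without $R$: from $\theta_V^*\theta_W = I$ and $\theta_W^*\theta_W = I$ one has $\|\theta_W - \theta_V\|_F^2 = \mathrm{tr}(\theta_W^*\theta_W) - \mathrm{tr}(\theta_W^*\theta_V) - \mathrm{tr}(\theta_V^*\theta_W) + \mathrm{tr}(\theta_V^*\theta_V)$, and since $\theta_V$ is an isometry $\mathrm{tr}(\theta_V^*\theta_V) = \dim H$, while the cross terms each equal $\mathrm{tr}(I_H) = \dim H$, so the whole expression is $\dim H - \dim H - \dim H + \dim H = 0$, whence $\theta_W = \theta_V$. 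Either route is short; I would present the first one as it is the more structural.
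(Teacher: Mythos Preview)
Your proof is correct and is essentially the same argument as the paper's, just with extra scaffolding. The paper simply expands $(\theta_V-\theta_W)^*(\theta_V-\theta_W)=\theta_V^*\theta_V-\theta_V^*\theta_W-\theta_W^*\theta_V+\theta_W^*\theta_W=I-I-I+I=0$ directly; your projection decomposition $\theta_W=\theta_V+R$ with $R$ ranging in $M^\perp$ is a pleasant structural reading of the same computation, but the step establishing $\theta_V\theta_V^*\theta_W=\theta_V$ is not actually needed once you have $\theta_V^*\theta_W=I$.
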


\begin{proof}
Let $\{W_j\}_{j=1}^m$ be any parseval dual (OPV)-frame for $\{V_j\}_{j=1}^m$ and let $\theta_V,\theta_W$ be the analysis operators for $\{V_j\}_{j=1}^m,\{W_j\}_{j=1}^m$ respectively. We have
\begin{eqnarray*}
&&(\theta_V-\theta_W)^*(\theta_V-\theta_W)\\
&=&\theta_V^*\theta_V-\theta_V^*\theta_W-\theta_W^*\theta_V+\theta_W^*\theta_W\\
&=&0.
\end{eqnarray*}
Thus $W_j=V_j,\forall j\in \{1,2,\cdots,m\}$.
\end{proof}

\begin{cor}
Let $\{V_j\}_{j=1}^m$ be a parseval (OPV)-frame and it admits a tight dual (OPV)-frame $\{W_j\}_{j=1}^m$ with $A>0$ as its frame bound. Then $A\geq 1$.
\end{cor}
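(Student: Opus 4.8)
The plan is to exploit the identity from the preceding proposition, namely that for a Parseval (OPV)-frame $\{V_j\}_{j=1}^m$ with analysis operator $\theta_V$ and any dual $\{W_j\}_{j=1}^m$ with analysis operator $\theta_W$, one has $\theta_V^*\theta_V = I$ and $\theta_W^*\theta_V = I$. The starting point is to compute $(\theta_W - \theta_V)^*(\theta_W - \theta_V)$, which expands to $\theta_W^*\theta_W - \theta_W^*\theta_V - \theta_V^*\theta_W + \theta_V^*\theta_V = \theta_W^*\theta_W - I - I + I = \theta_W^*\theta_W - I$. Since the left-hand side is positive semidefinite, this yields $\theta_W^*\theta_W \geq I$, i.e. $\sum_{j=1}^m W_j^*W_j \geq I$.

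The next step is to bring in the hypothesis that $\{W_j\}_{j=1}^m$ is a tight frame with bound $A$, which means precisely $\theta_W^*\theta_W = \sum_{j=1}^m W_j^*W_j = A\,I$. Combining this with the inequality $\theta_W^*\theta_W \geq I$ from the previous step gives $A\,I \geq I$, hence $A \geq 1$. That is the entire argument; it is essentially a one-line consequence of the computation already appearing in the proof of the previous proposition, restricted to the tight case.

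I do not anticipate any real obstacle here — the only thing to be careful about is the direction of the inequality $(\theta_W - \theta_V)^*(\theta_W - \theta_V) \geq 0$ and making sure the cross terms cancel correctly against $\theta_V^*\theta_V = I$ and $\theta_W^*\theta_V = \theta_V^*\theta_W = I$ (the latter two being the defining relation of a dual, together with its adjoint). One could also phrase the whole thing in terms of frame inequalities acting on an arbitrary vector $x \in H$: $\|\theta_W x - \theta_V x\|^2 = \langle \theta_W^*\theta_W x, x\rangle - 2\langle x, x\rangle + \langle x, x\rangle \geq 0$ forces $\langle \theta_W^*\theta_W x, x\rangle \geq \|x\|^2$, and tightness then pins the left side to $A\|x\|^2$. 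Either formulation gives $A \geq 1$ immediately, with equality exactly when $\theta_W = \theta_V$, recovering the previous proposition as the borderline case.
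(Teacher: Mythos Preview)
Your argument is correct and is essentially identical to the paper's: both compute $(\theta_V-\theta_W)^*(\theta_V-\theta_W)=(A-1)I\geq 0$ using $\theta_V^*\theta_V=\theta_V^*\theta_W=\theta_W^*\theta_V=I$ and $\theta_W^*\theta_W=AI$, and conclude $A\geq 1$. The only cosmetic difference is that you first isolate $\theta_W^*\theta_W\geq I$ before invoking tightness, whereas the paper substitutes $\theta_W^*\theta_W=AI$ directly into the expansion.
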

\begin{proof}
The result follows immdietely from
$$(\theta_V-\theta_W)^*(\theta_V-\theta_W)=(A-1)I\geq 0.$$
\end{proof}

\begin{thm}
Let $\{V_j\}_{j=1}^m$ be a Parseval (OPV)-frame. When $l<2n$, the only tight dual (OPV)- frame for $\{V_j\}_{j=1}^m$ is itself.
\end{thm}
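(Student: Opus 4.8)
The plan is to argue by contradiction using the rank/dimension bookkeeping available from the analysis operators. Suppose $\{W_j\}_{j=1}^m$ is a tight dual $(OPV)$-frame for $\{V_j\}_{j=1}^m$ with frame bound $A$. By Corollary preceding this theorem we already know $A\ge 1$; I want to show $A=1$, because once $A=1$ the tight dual $\{W_j\}_{j=1}^m$ is itself Parseval, and then Proposition (the uniqueness of the Parseval dual of a Parseval $(OPV)$-frame) forces $W_j=V_j$ for all $j$. So the whole theorem reduces to: the hypothesis $l<2n$ is incompatible with $A>1$.

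To get that incompatibility I would look at the operator $\theta_W-\theta_V\in B(H,\bigoplus_j H_j)$, where $\theta_V,\theta_W$ are the $l\times n$ analysis matrices. The duality relation is $\theta_W^*\theta_V=I$, and $\theta_V^*\theta_V=I$, $\theta_W^*\theta_W=AI$. Hence
\[
(\theta_W-\theta_V)^*(\theta_W-\theta_V)=\theta_W^*\theta_W-\theta_W^*\theta_V-\theta_V^*\theta_W+\theta_V^*\theta_V=(A-1)I .
\]
If $A>1$ this says $\theta_W-\theta_V$ is (a scalar multiple of) an isometry, so $\operatorname{rank}(\theta_W-\theta_V)=n$. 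Now I also have $\operatorname{rank}(\theta_V)=n$ (Parseval, full column rank). The key point is that the column space of $\theta_W-\theta_V$ and the column space of $\theta_V$ must be \emph{orthogonal} inside $\bigoplus_j H_j\cong\Bbb C^l$: indeed $\theta_V^*(\theta_W-\theta_V)=\theta_V^*\theta_W-\theta_V^*\theta_V=I-I=0$, since $\theta_V^*\theta_W=(\theta_W^*\theta_V)^*=I$. So $\bigoplus_j H_j$ contains two mutually orthogonal $n$-dimensional subspaces, forcing $l=\dim(\bigoplus_j H_j)\ge 2n$, contradicting $l<2n$. Therefore $A=1$, and the theorem follows from the two earlier results as explained.

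The main obstacle is essentially bookkeeping: being careful that $\operatorname{Range}(V_j)=H_j$ is \emph{not} needed here — we only need $\sum_j \dim H_j = l$, which is the standing convention in this section — and being careful about which of $\theta_V^*\theta_W$, $\theta_W^*\theta_V$ equals $I$ (both do, since $I$ is self-adjoint). One should also note the degenerate possibility $A=1$ with $\theta_W=\theta_V$ is already allowed and is exactly the conclusion; and that when $A>1$ the operator $(A-1)^{-1/2}(\theta_W-\theta_V)$ is a genuine isometry so its range really is $n$-dimensional, not less. No analysis beyond finite-dimensional rank counting is involved, so the argument is short once the orthogonality $\theta_V^*(\theta_W-\theta_V)=0$ is spotted.
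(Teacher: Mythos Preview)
Your argument is correct and is essentially the same as the paper's: both compute $(\theta_W-\theta_V)^*(\theta_W-\theta_V)=(A-1)I$, observe that for $A>1$ this makes $\theta_W-\theta_V$ a scalar isometry whose range lies in $\operatorname{Range}(\theta_V)^\perp$ via $\theta_V^*(\theta_W-\theta_V)=0$, and then count dimensions to get $l\ge 2n$. The only cosmetic difference is that for the $A=1$ case the paper reads $\theta_W=\theta_V$ directly off the displayed identity rather than invoking the earlier Proposition, but this is the same content.
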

\begin{proof}
Assume there is another (OPV)-frame $\{W_j\}_{j=1}^m$ which is the tight dual (OPV)-frame for $\{V_j\}_{j=1}^m$ . Then we have $\theta_V^*\theta_V=I,\theta_V^*\theta_W=I$ and
$$(\theta_V-\theta_W)^*(\theta_V-\theta_W)=(A-1)I.$$
If $A=1$, then $\theta_V=\theta_W$ and the result follows.

We assume that $A\neq 1$. Then $\frac{1}{\sqrt{A-1}}(\theta_V-\theta_W)$ is isometry. Let $\{e_1,e_2,\cdots,e_n\}$ be an orthonormal basis for $H$. Since $\frac{1}{\sqrt{A-1}}(\theta_V-\theta_W)$ is isometry, we get $\{(\theta_W-\theta_V)e_i\}_{i=1}^n$ is an orthogonal basis for a subspace of $\sum\limits_{j=1}^m \oplus H_j$ which is isomorphic to $H$. Observing that
$$\theta_V^*[(\theta_W-\theta_V)e_i]=[(\theta_V^*\theta_W-\theta_V^*\theta_V)]e_i=0,$$
it follows $$\{(\theta_W-\theta_V)e_i\}_{i=1}^n\subseteq ker(\theta_V^*)=Range(\theta_V)^\bot.$$
Hence $dimRange(\theta_V)^\bot\geq n$,i.e. $l-n\geq n$ and $l\geq 2n$.
\end{proof}

\begin{prop}
When $l\geq 2n$, there are infinitely many tight dual frames for $\{V_j\}_{j=1}^m$.
\end{prop}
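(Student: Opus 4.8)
The plan is to reverse the argument used in Theorem~\ref{?} (the previous one, showing $l<2n$ forces uniqueness) and exhibit an $n$-dimensional family of perturbations, each of which yields a distinct tight dual frame. Concretely, since $\{V_j\}_{j=1}^m$ is Parseval, $\theta_V$ is an isometry, so $Range(\theta_V)$ is an $n$-dimensional subspace of $\sum_{j=1}^m\oplus H_j$ and its orthogonal complement $Range(\theta_V)^\perp=\ker(\theta_V^*)$ has dimension $l-n\geq n$. First I would pick any linear isometry $U\colon H\to Range(\theta_V)^\perp$; such a $U$ exists precisely because $l-n\geq n$, and there are infinitely many such $U$ (the real or complex dimension of the Stiefel manifold of isometries $H\to Range(\theta_V)^\perp$ is positive whenever $l-n\geq n$, and in fact one gets a whole scaled family $tU$ for $t\in(0,\infty)$, or one can rotate within the target space).

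Next I would set $\theta_W:=\theta_V+U$ and let $\{W_j\}_{j=1}^m$ be the (OPV)-frame whose analysis operator is $\theta_W$ (so $W_j$ is the appropriate block of $\theta_W$). The verification is then a short computation: since $Range(U)\subseteq\ker(\theta_V^*)$ we have $\theta_V^*U=0$, hence
\begin{eqnarray*}
\theta_W^*\theta_W&=&(\theta_V+U)^*(\theta_V+U)\\
&=&\theta_V^*\theta_V+\theta_V^*U+U^*\theta_V+U^*U\\
&=&I+0+0+I=2I,
\end{eqnarray*}
so $\{W_j\}_{j=1}^m$ is a tight (OPV)-frame with frame bound $2$, and
$$\theta_W^*\theta_V=(\theta_V+U)^*\theta_V=\theta_V^*\theta_V+U^*\theta_V=I,$$
so $\{W_j\}_{j=1}^m$ is dual to $\{V_j\}_{j=1}^m$. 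Finally, distinct choices of $U$ give distinct $\theta_W$ hence distinct dual frames: if $U_1\neq U_2$ then $\theta_V+U_1\neq\theta_V+U_2$. Scaling a single fixed isometry $U_0$ by $t\in(0,\infty)$ already produces infinitely many, since then $\theta_W^*\theta_W=(1+t^2)I$ with $1+t^2$ ranging over $(1,\infty)$, and these are pairwise distinct.

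The only real point requiring care is the existence (and abundance) of an isometry $U\colon H\to Range(\theta_V)^\perp$, i.e.\ that $\dim H=n\leq l-n=\dim Range(\theta_V)^\perp$, which is exactly the hypothesis $l\geq 2n$; once that is granted everything else is the routine block computation above. I would also remark that each such $\{W_j\}_{j=1}^m$ has frame bound $\geq 1$, consistent with Corollary~\ref{?}, and that when $l=2n$ one may still take $U$ to be any isometry onto the full complement, so the family is nonempty in the boundary case as well. This completes the proof that there are infinitely many tight dual frames for $\{V_j\}_{j=1}^m$ when $l\geq 2n$.
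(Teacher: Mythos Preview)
Your proposal is correct and follows essentially the same approach as the paper: construct the perturbation as a (scalar multiple of an) isometry from $H$ into $Range(\theta_V)^\perp$, add it to $\theta_V$, and verify tightness and duality via the orthogonality relation $\theta_V^*U=0$. Your write-up is in fact more explicit than the paper's, since you compute the frame bound $1+t^2$ and spell out why distinct scalings yield distinct tight duals.
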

\begin{proof}
When $l\geq 2n$, we define $\theta_W:H\rightarrow \sum\limits_{j=1}^m\oplus H_j$ to be a constant times an isometry with $\theta_V(H)\bot\theta_W(H)$. Then one can easy to check
$$(\theta_V+\theta_W)^*\theta_V=I$$ and
$$(\theta_V+\theta_W)^*(\theta_V+\theta_W)=AI,$$
for some $A>0$. Therefore $\{V_j+W_j\}_{j=1}^m$ is a tight dual (OPV)-frame for $\{V_j\}_{j=1}^m$.
\end{proof}

\section{Optimal (OPV)-frames under 1-erasure}

Let $x\in H$ be a signal, and let $\{V_j\}_{j=1}^m$ be an (OPV)-frame. In the quantum communication, $x$ is encoded as $\{V_jx\}_{j=1}^m$ and is transmitted to the receiver. However, in this process, some packets of data may be lost. In this paper we consider the lost of total packets, that is the lost data is $\{V_jx\}_{j\in I}$, where $I\subseteq \{1,2,\cdots,m\}$. One question is whether we will still have an (OPV)-frame and another is which (OPV)-frames are optimal in some sense for erasures.

\begin{defn}
An (OPV)-frame $\{V_j\}_{j=1}^m$ is said to be robust to k erasures if $\{V_j\}_{j\in I^c}$ is still an (OPV)-frame, for $I$ any index set of k erasures, i.e. $I\subseteq \{1,2,\cdots,m\}, |I|=k$.
\end{defn}

{\bf Example 3} Let $H_5$ be a 5-dimensional Hilbert space with orthonormal basis $\{e_1,e_2,e_3,e_4,e_5\}$. Let
$$P_1:H_5\longrightarrow span\{e_1,e_2\};$$
$$P_2:H_5\longrightarrow span\{e_2,e_3\};$$
$$P_3:H_5\longrightarrow span\{e_3,e_4\};$$
$$P_4:H_5\longrightarrow span\{e_4,e_5\};$$
$$P_5:H_5\longrightarrow span\{e_5,e_1\}.$$
be orthogonal projections. Then $\{P_j\}_{j=1}^5$ is an (OPV)-frame which is robust to one erasure.

The following proposition shows that the robustness is remained by compressed by an orthogonal projection.

\begin{prop} Let $\{V_j\}_{j=1}^m$ be an (OPV)-frame for $H$ robust to k erasures and let $P$ be an orthogonal projection on $H$. Then $\{V_jP\}_{j=1}^m$ is an (OPV)-frame for $P(H)$ robust to k erasures.
\end{prop}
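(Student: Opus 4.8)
The plan is to verify two things in turn: first that $\{V_jP\}_{j=1}^m$ is genuinely an (OPV)-frame for $P(H)$ (so that the statement is not vacuous), and second that deleting any $k$ indices still leaves a frame. The underlying idea is that compression by an orthogonal projection, restricted to the range of that projection, simply transports the frame inequality $AI \le \sum_j V_j^*V_j \le BI$ down to $P(H)$. So the first step is to compute, for $x \in P(H)$, the quantity $\sum_{j=1}^m \langle (V_jP)^*(V_jP)x, x\rangle = \sum_{j=1}^m \langle V_j^*V_j Px, Px\rangle = \sum_{j=1}^m \langle V_j^*V_j x, x\rangle$, using $Px = x$ on $P(H)$. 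Since $\{V_j\}_{j=1}^m$ is an (OPV)-frame for $H$ with bounds $A, B$, this quantity lies between $A\|x\|^2$ and $B\|x\|^2$; hence $A I_{P(H)} \le \sum_{j=1}^m (V_jP)^* (V_jP)\big|_{P(H)} \le B I_{P(H)}$, which is exactly the frame condition for $P(H)$ (with $P$ here regarded as a map $H \to P(H)$, so that $(V_jP)^*$ is taken within $P(H)$). This is essentially the computation already carried out in the Proposition on $\{V_jP\}$ being Parseval, just with general bounds in place of $1$.

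The second step handles the erasures. Fix an index set $I \subseteq \{1,\dots,m\}$ with $|I| = k$. Because $\{V_j\}_{j=1}^m$ is robust to $k$ erasures, $\{V_j\}_{j \in I^c}$ is an (OPV)-frame for $H$, say with lower bound $A' > 0$: that is, $\sum_{j \in I^c} V_j^*V_j \ge A' I$. Now repeat the computation of the first step with the sum restricted to $I^c$: for $x \in P(H)$, $\sum_{j \in I^c} \langle (V_jP)^*(V_jP)x, x\rangle = \sum_{j \in I^c} \langle V_j^*V_j x, x\rangle \ge A'\|x\|^2$, and the upper Bessel bound is inherited just as before. Hence $\{V_jP\}_{j \in I^c}$ is an (OPV)-frame for $P(H)$. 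Since $I$ was an arbitrary $k$-subset, $\{V_jP\}_{j=1}^m$ is robust to $k$ erasures by definition.

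There is really no serious obstacle here; the only point requiring a little care is bookkeeping about domains and adjoints — namely that when we call $\{V_jP\}_{j=1}^m$ a frame "for $P(H)$," the relevant adjoint is computed relative to the Hilbert space $P(H)$ rather than $H$, and the key identity making this work is $P^* = P = P^2$ together with $Px = x$ for $x \in P(H)$. Once that is pinned down, both the non-degeneracy of the compressed frame and the persistence of robustness follow from the same one-line inequality, restricted to $I^c$ in the second case. I would write the proof as: (1) state the bound computation for $x \in P(H)$ with the sum over any subset $J \subseteq \{1,\dots,m\}$; (2) apply it with $J = \{1,\dots,m\}$ to get the frame property for $P(H)$; (3) apply it with $J = I^c$ for an arbitrary $k$-erasure set $I$, invoking robustness of the original frame, to conclude.
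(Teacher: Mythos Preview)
Your proposal is correct and follows essentially the same route as the paper: both arguments reduce to the identity $(V_jP)^*(V_jP)=PV_j^*V_jP$ and then transport the operator inequality $A'I\le\sum_{j\in I^c}V_j^*V_j\le B'I$ (guaranteed by robustness of $\{V_j\}$) down to $P(H)$. The paper writes this as the sandwiched operator inequality $A'P\le P\bigl(\sum_{j\in I^c}V_j^*V_j\bigr)P\le B'P$, whereas you test against vectors $x\in P(H)$ via $\langle\cdot\,x,x\rangle$; these are equivalent formulations of the same one-line computation.
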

\begin{proof}
For any index set $I\subseteq \{1,2,\cdots,m\}$ with $|I|=k$, we have
\begin{eqnarray*}
&&AP\leq \sum\limits_{j\in I^c}(V_jP)^*(V_jP)=\sum\limits_{j\in I^c}PV_j^*V_jP\\
&=&P\sum\limits_{j\in I^c}V_j^*V_jP\leq BP
\end{eqnarray*}
\end{proof}

At the receiver side, when we receive a encoded signal, we decode it using the reconstruction formulae $x=\sum\limits_{j=1}^m W_j^*V_jx$, where $\{W_j\}_{j=1}^m$ is dual to $\{V_j\}_{j=1}^m$. One natural choice of $\{W_j\}_{j=1}^m$ is the canonical dual $\{V_jS^{-1}\}_{j=1}^m$. But in practice, the inverse of a matrix is hard to computing. So we usually choose $\{V_j\}_{j=1}^m$ to be a tight or Parseval (OPV)-frame in quantum computing.

Following we will find out the optimal Parseval (OPV)-frames for $H$ under 1-erasure.  Let $\{V_j\}_{j=1}^m$ be an Parseval (OPV)-frame for a $n-$dimensional Hilbert space $H$ with $V_j\in B(H,H_j)$ where $dim(H_j)=l_j, j=1,2,\cdots,m$. Let $\widetilde{E_i}$'s be  $l_i\times l_i$ matrices, $i=1,2,\cdots,m$.
Let $E_j=diag(\widetilde{E_1},\widetilde{E_2},\cdots,\widetilde{E_m})$ where $\widetilde{E_j}$ is a zero matrix and $\widetilde{E_i}$'s are identity matrices for $i\neq j$.

Suppose that in the process of transmission, one packet of data $V_ix$ is lost, for some  $i\in \{1,2,\cdots,m\}$. Then the received vector is $E_i\theta_Vx$ and the error in reconstructing $x$ is given by
$$x-\theta_V^*E_i\theta_V x=\theta_V^*(I-E_i)\theta_Vx=\theta_V^*D_i\theta_Vx,$$
where $D_i=I-E_i$.

Let $\{V_j\}_{j=1}^m$ be a Parseval (OPV)-frame. we set
$$d_1(\{V_j\}_{j=1}^m)=max\{\|\theta_V^*D_j\theta_V\|_F:j\in \{1,2,\cdots,m\}\}.$$
Obviously, our goal is to find a Parseval (OPV)-frame $\{W_j\}_{j=1}^m$ such that it minimizes $d_1(\{V_j\}_{j=1}^m)$, i.e. $$d_1(\{W_j\}_{j=1}^m)=inf\{d_1(\{V_j\}_{j=1}^m): \{V_j\}_{j=1}^m\ \ is\ \ a \ \ Parseval\ \ (OPV)-frame\}.$$
\begin{thm}
$d_1(\{W_j\}_{j=1}^m)=inf\{d_1(\{V_j\}_{j=1}^m): \{V_j\}_{j=1}^m\ \ is\ \ a\ \ Parseval\ \ (OPV)-frame\}$ if and only if $\{W_j\}_{j=1}^m$ is an equal-norm Parseval (OPV)-frame with $\|W_j\|_F=\sqrt{\frac{n}{m}},j=1,2,\cdots,m$.
\end{thm}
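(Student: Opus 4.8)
The plan is to reduce $d_1$ to a matrix quantity, bound it below over all Parseval (OPV)-frames by a Cauchy--Schwarz and pigeonhole argument, and then read off the equality case. The first step is to identify the error operator: since $D_j=I-E_j$ is block diagonal with the $l_j\times l_j$ identity in the $j$-th slot and zeros elsewhere, $D_j\theta_V$ retains the rows of $\theta_V$ belonging to block $j$ and annihilates the rest, and multiplying on the left by $\theta_V^*=[\,V_1^*\ \cdots\ V_m^*\,]$ collapses the block sum to the single surviving term, so $\theta_V^*D_j\theta_V=V_j^*V_j$. Hence
$$d_1(\{V_j\}_{j=1}^m)=\max_{1\le j\le m}\|V_j^*V_j\|_F ,$$
where each $V_j^*V_j$ is a positive semidefinite $n\times n$ matrix, $tr(V_j^*V_j)=\|V_j\|_F^2$, and, by the Parseval identity, $\sum_{j=1}^m\|V_j\|_F^2=tr\big(\sum_j V_j^*V_j\big)=tr(I_n)=n$.

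Next I would establish the sharp lower bound. For a positive semidefinite $n\times n$ matrix $M$ with eigenvalues $\mu_1,\dots,\mu_n\ge 0$, Cauchy--Schwarz gives $tr(M)^2=\big(\sum_k\mu_k\big)^2\le n\sum_k\mu_k^2=n\|M\|_F^2$, i.e. $\|M\|_F\ge tr(M)/\sqrt n$, with equality exactly when all $\mu_k$ coincide, that is when $M=\tfrac{tr(M)}{n}I_n$. Applied to $M=V_j^*V_j$ this gives $\|V_j^*V_j\|_F\ge\|V_j\|_F^2/\sqrt n$; combined with $\max_j\|V_j\|_F^2\ge\tfrac1m\sum_j\|V_j\|_F^2=n/m$ we obtain $d_1(\{V_j\}_{j=1}^m)\ge\tfrac1{\sqrt n}\cdot\tfrac nm=\tfrac{\sqrt n}{m}$, so the infimum is at least $\sqrt n/m$. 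For the equality case, suppose $d_1(\{W_j\}_{j=1}^m)=\sqrt n/m$: then for every $j$ one has $\sqrt n/m\ge\|W_j^*W_j\|_F\ge\|W_j\|_F^2/\sqrt n$, so $\|W_j\|_F^2\le n/m$; since these $m$ numbers sum to $n$ they are all equal to $n/m$, whence both inequalities become equalities for each $j$ and $W_j^*W_j=\tfrac1m I_n$ for all $j$, so in particular $\{W_j\}$ is an equal-norm Parseval (OPV)-frame with $\|W_j\|_F=\sqrt{n/m}$. Conversely, if $W_j^*W_j=\tfrac1m I_n$ for all $j$ then $\|W_j^*W_j\|_F=\tfrac1m\|I_n\|_F=\sqrt n/m$ for every $j$, hence $d_1(\{W_j\})=\sqrt n/m$; such frames exist (take $W_j=\tfrac1{\sqrt m}V_j$ with each $V_j$ an isometric embedding of $H$ into $H_j$), so the infimum is attained and equals $\sqrt n/m$.

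The step I expect to be the genuine obstacle is matching this equality analysis to the stated characterization. The argument above shows that optimality in fact forces the \emph{stronger} condition $W_j^*W_j=\tfrac1m I_n$ for every $j$, whereas a general equal-norm Parseval (OPV)-frame only constrains $tr(W_j^*W_j)$ and not the full spectrum of $W_j^*W_j$; for instance on a two-dimensional space $W_1=\mathrm{diag}(1,0)$, $W_2=\mathrm{diag}(0,1)$ is equal-norm Parseval with $\|W_j\|_F=1=\sqrt{n/m}$ but has $d_1=1>\sqrt2/2$. So to make the stated equivalence literally correct one must read \emph{equal-norm Parseval with $\|W_j\|_F=\sqrt{n/m}$} as the scalar-frame-operator condition $W_j^*W_j=\tfrac1m I_n$, which in turn presupposes $l_j\ge n$ for all $j$ --- exactly the regime in which the infimum is attained. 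Nailing down that interpretation, and with it the attainment claim, is where the real work lies; the remainder is the two-line Cauchy--Schwarz-plus-pigeonhole estimate above.
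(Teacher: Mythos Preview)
Your reduction $\theta_V^*D_j\theta_V=V_j^*V_j$ coincides with the paper's first step. The divergence comes immediately after: the paper asserts the identity $\|V_j^*V_j\|_F=\|V_j\|_F^2$ outright, and from it concludes $d_1(\{V_j\})=\max_j\|V_j\|_F^2\ge n/m$, with equality precisely for equal-norm Parseval frames. You instead use only the Cauchy--Schwarz inequality $\|V_j^*V_j\|_F\ge\|V_j\|_F^2/\sqrt n$ and arrive at the lower bound $\sqrt n/m$, with equality forcing the stronger condition $W_j^*W_j=\tfrac1mI_n$.

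Your caution is justified. If $V_j$ has singular values $\sigma_1,\dots,\sigma_r$, then $\|V_j^*V_j\|_F=\big(\sum_k\sigma_k^4\big)^{1/2}$ while $\|V_j\|_F^2=\sum_k\sigma_k^2$; these agree only when $\mathrm{rank}\,V_j\le 1$, i.e.\ in the classical vector-frame situation. The paper's identity therefore fails for genuinely operator-valued $V_j$, and with it the claimed value $n/m$ and the stated equality characterisation. Your counterexample $W_1=\mathrm{diag}(1,0)$, $W_2=\mathrm{diag}(0,1)$ is exactly to the point: it is equal-norm Parseval with $\|W_j\|_F=\sqrt{n/m}$ but has $d_1=1>\sqrt2/2$, so equal norm alone does not give optimality.

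In short, your argument is not a variant of the paper's proof but a correction of it: the paper's second equality is a rank-one artefact, and the honest optimum (when $l_j\ge n$ so that coisometric $W_j$ exist) is $\sqrt n/m$, attained exactly when every $W_j^*W_j=\tfrac1mI_n$. Your closing paragraph identifies the right interpretation; there is no missing idea on your side.
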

\begin{proof}
Since for any $i\in \{1,2,\cdots,m\}$,
$$\|\theta_V^*D_i\theta_V\|_F=\|V_i^*V_i\|_F=\|V_i\|_F^2,$$
we get
\begin{eqnarray*}
d_1(\{V_j\}_{j=1}^m)&=&max\{\|\theta_V^*D_i\theta_V\|_F:1\leq i\leq m\}\\
&=& max\{\|V_i\|_F^2:1\leq i\leq m\}.
\end{eqnarray*}

On the other hand we have
$$\sum\limits_{j=1}^m\|V_j\|_F^2=tr(\theta_V\theta_V^*)=n.$$
Thus for some $j$, $\|V_j\|_F^2\geq \dfrac{n}{m}$ and so $d_1(\{V_j\}_{j=1}^m)\geq \dfrac{n}{m}$.
Hence for an equal-norm Parseval (OPV)-frame $\{W_j\}_{j=1}^m$ with $\|V_j\|_F^2=\dfrac{n}{m}$ can satisfy
$d_1(\{W_j\}_{j=1}^m)=inf\{d_1(\{V_j\}_{j=1}^m): \{V_j\}_{j=1}^m\ \ is \ \ Parseval\ \ (OPV)-frame\}.$ The converse is obvious.
\end{proof}

Now a natural question is whether the Parseval (OPV)-frames $\{V_j\}_{j=1}^m$ with $\|V_j\|_F=\sqrt{\frac{n}{m}}$ exist? We would consider a more general case. Given a positive self-adjoint operator $S$, $\alpha_1\geq \alpha_2\geq\cdots\geq \alpha_m$ satisfying some conditions and $H_j's$ are Hilbert spaces with dimension $l_j$. We will construct an (OPV)-frame $\{V_j\}_{j=1}^m$ such that $\sum\limits_{j=1}^mV_j^\ast V_j=S$ and $V_jV_j^*=\alpha_j I(j=1,2,\cdots,m)$. In our discussion we will use the following lemma

\begin{lem}\cite{cas7}\label{111}
Let $\lambda_1,\cdots,\lambda_m$ and $a_1,\cdots,a_m$ be real numbers such that $a_1^2\geq a_2^2\geq\cdots\geq a_m^2$ and for any $1\leq k\leq m$,
$$\sum\limits_{i=1}^k a_i^2\leq \sum\limits_{i=1}^k\lambda_i,\ \ \sum\limits_{i=1}^ma_i^2=\sum\limits_{i=1}^m\lambda_i.$$
Let $\Lambda$ be a diagonal matrix with $diag(\Lambda)=(\lambda_1,\cdots,\lambda_m)$. Then there is an unitary matrix $O$, such that
$$diag(O\Lambda O^*)=(a_1^2,\cdots,a_m^2).$$
\end{lem}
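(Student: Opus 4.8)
The plan is to recognize this as (the surjectivity half of) the Schur--Horn theorem and to reduce it to the classical majorization characterization of the diagonals of a self-adjoint matrix with prescribed spectrum. (Implicitly one reads the hypothesis with the $\lambda_i$ arranged decreasingly; in any event the stated partial-sum inequalities, together with $a_1^2\geq\cdots\geq a_m^2$, say precisely that $(a_1^2,\dots,a_m^2)$ is majorized by $(\lambda_1,\dots,\lambda_m)$, since replacing the $\lambda_i$ by their decreasing rearrangement only increases each partial sum $\sum_{i=1}^k\lambda_i$.)

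First I would invoke Schur--Horn in the form: if real numbers satisfy $d_1\geq\cdots\geq d_m$, $\lambda_1\geq\cdots\geq\lambda_m$, $\sum_{i=1}^k d_i\leq\sum_{i=1}^k\lambda_i$ for all $k$, and $\sum_{i=1}^m d_i=\sum_{i=1}^m\lambda_i$, then there is a self-adjoint matrix $B$ with eigenvalues $\lambda_1,\dots,\lambda_m$ and diagonal $(d_1,\dots,d_m)$. Applying this with $d_i=a_i^2$ produces such a $B$; by the spectral theorem $B=O\Lambda O^*$ for a unitary $O$, and then $\mathrm{diag}(O\Lambda O^*)=(a_1^2,\dots,a_m^2)$, which is the assertion.

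If a self-contained argument is wanted, I would instead establish the required direction directly by induction on $m$ using plane (Givens) rotations. The base case $m=1$ forces $a_1^2=\lambda_1$. For the inductive step the key elementary fact is that for a $2\times 2$ diagonal block $\mathrm{diag}(\mu,\nu)$ and any target $t$ with $\min(\mu,\nu)\leq t\leq\max(\mu,\nu)$ there is a rotation $R$ with $(R\,\mathrm{diag}(\mu,\nu)\,R^*)_{11}=t$ and the same trace; one then picks a pair of indices $p,q$ with $\lambda_p\geq a_1^2\geq\lambda_q$ (which exists because $\lambda_m\leq a_1^2\leq\lambda_1$ by the majorization relations), rotates in the $(p,q)$-plane so that, after relabeling, position $1$ of the diagonal equals $a_1^2$, and checks that the spectrum of the remaining $(m-1)\times(m-1)$ diagonal block still majorizes $(a_2^2,\dots,a_m^2)$, so the inductive hypothesis applies; the composite of all the rotations is the desired $O$. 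The main obstacle on this route is exactly that last verification — that after the first rotation the residual $(m-1)$-tuple of diagonal entries still dominates $(a_2^2,\dots,a_m^2)$ in the majorization order — which forces a careful choice of the rotation pair (take $\lambda_p$ as large as possible and $\lambda_q$ the smallest entry still lying below $a_1^2$) together with a short but fiddly check of the new partial-sum inequalities. By contrast, if one is content to cite Schur--Horn, essentially nothing remains beyond matching notation and applying the spectral theorem.
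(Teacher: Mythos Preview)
Your proposal is correct: this is precisely the Schur--Horn theorem (the ``sufficiency'' direction), and both your citation-based argument and your inductive Givens-rotation sketch are standard valid proofs. Note, however, that the paper does not supply its own proof of this lemma at all---it is quoted from \cite{cas7} (Casazza--Leon) and used as a black box in the subsequent construction---so there is no paper proof to compare against; your write-up simply fills in what the paper cites.
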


Since $S$ is positive self-adjoint, we have
$$S=U\left[\begin{array}{cccc} \lambda_1 & &\\
 &\ddots& \\
 & & \lambda_n\end{array}\right]_{n\times n}U^* ,$$
 where $U$ is an unitary matrix. Let
 $$D=\left[\begin{array}{cccc}\sqrt{\lambda_1} & & &\\
  &\sqrt{\lambda_2} & &\\
  & &\ddots & \\
  & & &\sqrt{\lambda_n}\\
  0 &0 &\cdots &0\\
   &\cdots & &\\
   0 &0 &\cdots &0
   \end{array}
   \right]$$
   be a $l\times n$ matrix and let $W$ be a $l\times l$ unitary matrix. Taking $F=WDU^*$, we have
   $$F^*F=UD^*W^*WDU^*=S.$$ We write
   $$F=\left[\begin{array}{cccc}W_1\\ W_2\\ \vdots \\ W_m\end{array}\right]$$
   as a blocked matrix.
 Let
 $$G:=FF^*=\left[\begin{array}{cccc}
 W_1W_1^* &\ast &\ast &\ast\\
 \ast &W_2W_2^* &\ast &\ast\\
 \ast &\ast  &\ddots  &\ast\\
 \ast &\ast &\ast &W_mW_m^*
 \end{array}
 \right]$$

Now for any $j\in \{1,2,\cdots m\}$, there exits a $l_j\times l_j$ unitary matrix $T_1^{(j)}$, such that

$$
W_jW_j^*=T_1^{(j)}\left[\begin{array}{cccccc}
\theta_1 & &\\
         &\ddots &\\
         &       &\theta_{l_j}
\end{array}
\right]T_1^{(j)\ast}
$$

We assume $\alpha_j, j\in \{1,2,\cdots,m\}$ satisfies
\begin{eqnarray}
&&k\alpha_j\leq \theta_1+\cdots+\theta_k, k=1,2,\cdots,l_j;\\
&&l_j\alpha_j=\theta_1+\cdots+\theta_{l_j}.\nonumber
\end{eqnarray}
Then from Lemma \ref{111}, there exists an unitary matrix $T_2^{(j)}$ such that
$$
\left[\begin{array}{cccccc}
\alpha_j & & &\\
         &\alpha_j & &\\
         &         &\ddots &\\
         &         &       &\alpha_j
         \end{array}
         \right]=T_2^{(j)}\left[\begin{array}{cccccc}

\theta_1 & &\\
         &\ddots &\\
         &       &\theta_{l_j}

\end{array}
\right]T_2^{(j)*}
$$

We let $$V_j=T_2^{(j)}T_1^{(j)}W_j,j=1,2,\cdots,m.$$
Then it is easy to check $\sum\limits_{j=1}^mV_j^\ast V_j=S$ and $V_jV_j^\ast=\alpha_jI$. Thus $\{V_j\}_{j=1}^m$ is the (OPV)-frame as required.


\begin{thebibliography}{{voi11}}

\bibitem{ben}J.Benedetto, M.Fickus, Finite normalized tight frames, Adv. Comput. Math. 18: 357-385

\bibitem{ben2} J.Benedetto, Colella, Wavelet analysis of spectrograme seizure chips, in "Pro. SPIE Wavelet Appl.
in signal and image proc. III," vol. 2569 San Diego, CA

\bibitem{bod}B.G.Bodmann, Optimal linear transmission by loss-insensitive packet encoding, Appl. Comput. Harmon. Anal., 22 (2007) 274-285

\bibitem{cas1}P.G.Casazza, J.Kovacevic, Equal-norm tight frames with erasures, Adv. Comput. Math. 18 (2003) 387-430

\bibitem{cas2} P.G.Casazza, The art of frames, Taiwanese J. Math. 4 (2000), pp. 129-201

\bibitem{cas3} P.G.Casazza, M.Fickus and E.Weber, The Kadison-Singer problem in mathematics and engineering:
a detailed account, Contemp.Math. 414,  (2006) 299-356

\bibitem{cas4}P.G.Casazza, M.T.Leon, Existence and construction of finite tight frames, J. Comput. and Appl. Math., Vol. 4 No. 3 (2006) 277-289

\bibitem{cas5}P.G.Casazza, J.Kova\v{c}evi\'{c}, Uniform tight frames with erasures, Adv.  Comput. Math., Vol. 18, Nos. 2-4 (2003) pp. 387-430

\bibitem{cas6}P.G.Casazza, G.Kutyniok, and S.Li, Fusion frames and distributed processing, Appl. Comput. Harmon. Anal., 25 (2008) no. 1, 114-132

\bibitem{cas7}P.G. Casazza, M.T.Leon, Existence and construction of  finite frame with  a given frame operator, Int. J. of Pure and Appl. Math. (2010) (Accepted)
    
\bibitem{choi}M.D.Choi, Completely positive linear maps on com;ex matrices, Lin. Alg. Appl. 10 (1975), 285-290

\bibitem{chr}O.Christensen, An introduction to frames and Riesz bases, Birkhauser, Boston, 2003

\bibitem{dea} I.Deaubechies, Then lectures on wavelets, SIAM. Philadephia, 1992

\bibitem{dgk}P.L.Dragotti, V.K.Goyal, J.Kova\v{c}evi\'{c}, Filter bank frame expansions with erasures, IEEE Trans.
Inform. Th., 46 (6): 1439-1450, 2002

\bibitem{ef} Y.Eldar, G.D.Forney, Optimal tight frames and quantum measurement,
IEEE Trans. Inform. Th., vol. 48. no.3 2002

\bibitem{feng}D.J.Feng, L.Wang, Y.Wang, Generation of finite tight frames by Householder transformation,
Adv.Compt.Math., (2006) 24:297-309

\bibitem{fran} M.Frank, D.Larson, Frames in Hilbert C*-modules and C*-algebras, J. Operator Theory, 2002, 48: 273-314

\bibitem{gab} J.P.Gabardo, D.Han, Frame representation for group-like unitary systems, J. Operator Theory, 49, 223-244 (2003)

\bibitem{gk} V.Goyal, J.Kova\v{c}evi\'{c}, M.Vetterli, Multiple description transform coding: robustness to erasures using tight frame expansions, Proc. IEEE Int. Symp. on Inform. Th., 1998 Cambridge,MA

\bibitem{han1}D. Han, D. Larson, Frames, Bases and gruop representations Memoris, AMS, 147 (2000) No. 694

\bibitem{han2}D.Han, P.Li, B. Meng and W. Tang, Operator valued frames and structured quantum channels, submitted to Sci. in China

\bibitem{hol}R.B.Holmes, V.Paulsen, Optimal frame for erasures, Lin. Alg. Appl., 377 (2004)31-51

\bibitem{klz} V. Kaftal, D. Larson,  S. Zhang, Operator-valued frames, Trans. Amer. Math. Soc. 361 (2009), 6349-6385

\bibitem{kr1}D.W.Kribs, A quantum computing primer for operator theorists, Arxiv.math./0404553v2, 2004

\bibitem{sun}W.Sun, G-frames and g-Riesz bases, J. Math. Anal. Appl., vol.332, no.1 437-452

\end{thebibliography}
\end{document}